\documentclass{amsart}

\usepackage{amssymb}
\usepackage{amsthm}
\usepackage{amsfonts}
\usepackage{mathrsfs}
\usepackage{amsmath}
\usepackage{extarrows}
\vfuzz2pt 
\hfuzz2pt 
\newtheorem{theorem}{Theorem}[section]
\newtheorem{corollary}[theorem]{Corollary}
\newtheorem{lemma}[theorem]{Lemma}
\newtheorem{proposition}[theorem]{Proposition}

\numberwithin{equation}{section}



\newcommand{\RePt}{\mathrm{Re}\,}
\newcommand{\ImPt}{\mathrm{Im}\,}
\newcommand{\ball}{\mathbb{B}}

\newcommand{\bfL}{\mathcal{L}}

\newcommand{\calL}{\mathcal{L}}
\newcommand{\calU}{\mathcal{U}}

\newcommand{\bfi}{\mathbf{i}}
\newcommand{\bfbeta}{\boldsymbol{\beta}}
\newcommand{\bfrho}{\boldsymbol{\rho}}
\newcommand{\calB}{\mathcal{B}}
\newcommand{\bbB}{\mathbb{B}}
\newcommand{\bbC}{\mathbb{C}}
\newcommand{\bbR}{\mathbb{R}}
\newcommand{\bbH}{\mathbb{H}}
\newcommand{\bbN}{\mathbb{N}}

\newcommand{\bSieg}{b\mathcal{U}}

\begin{document}

\title[A Bloch-type space]{A Bloch-type space and the predual of  $A_\lambda^1$\\ on the Siegel upper half-space}

\author{Congwen Liu}
\email{cwliu@ustc.edu.cn}
\address{CAS Wu Wen-Tsun Key Laboratory of Mathematics,
School of Mathematical Sciences,
University of Science and Technology of China\\
Hefei, Anhui 230026,
People's Republic of China}
\thanks{
The first author was supported by the National Natural Science Foundation of China grant 11971453.}

\author{Jiajia Si}
\email{sijiajia@mail.ustc.edu.cn}
\address{School of Mathematics and Statistics, Hainan University, Haikou, Hainan 570228,
People's Republic of China.}
\thanks{The second author was supported by the National Natural Science Foundation of China grant 12201159, the Hainan Provincial Natural Science Foundation of China grant 124YXQN413 
and the Nanhai New Star Project grant 2025NHXX203.}

\subjclass[2010]{Primary 32A10; Secondary 32A18, 32A36.}

\begin{abstract}
This paper aims to determine the predual of the Bergman space $A_\lambda^1$ on the Siegel upper half-space. To achieve this, a Bloch-type space $\widetilde{\calB}$ is introduced and studied, and some of its essential properties are established. We identify the little Bloch-type space $\widetilde{\calB}_0$ with the predual of $A_\lambda^1$ via a duality pairing.
\end{abstract}

\keywords{Siegel upper half-space; Bloch-type space; Bergman space; predual.}

\maketitle

\section{introduction}

Let $\mathbb{C}^n$ be the $n$-dimensional complex Euclidean space. The Siegel upper half-space of  $\mathbb{C}^n$ is the set
\[
\calU=\left\{ z\in\mathbb{C}^n:\bfrho(z)>0\right\},
\]
where $\bfrho(z)=\ImPt z_n-|z^{\prime}|^2$. 
Here and throughout the paper, we use the notation
\[
z=(z^{\prime},z_n),\quad \text{where}\, z^{\prime}=(z_1,\cdots,z_{n-1})\in\mathbb{C}^{n-1}\,\, \text{and}\,\, z_n\in\mathbb{C}^{1}.
\]

For $0<p<\infty$ and $\lambda>-1$, the weighted Bergman space $A_{\lambda}^p(\calU)$ consists of holomorphic functions $f$ on $\calU$ for which
\begin{equation*}
\|f\|_{p,\lambda}=\bigg\{\int\limits_{\calU} |f(z)|^p dV_{\lambda}(z)\bigg\}^{1/p}<\infty,
\end{equation*}
where $dV_{\lambda}(z)=c_{\lambda} \bfrho(z)^{\lambda} dV(z)$,
\[
 c_{\lambda} =\frac{\Gamma(n+1+\lambda)}{4\pi^n \Gamma(1+\lambda)},
\]
 and $dV$ is the Lebesgue measure on $\mathbb{C}^n$.
It is well known that there is an orthogonal projection of $L^2(\calU,dV_{\lambda})$ onto $A_{\lambda}^2(\calU)$, which can be expressed as an integral operator:
\[
\mathcal{P}_{\lambda}f(z) =  \int\limits_{\calU} K_{\lambda}(z,w) f(w) dV_{\lambda}(w),
\]
where
\[
K_{\lambda}(z,w)=\left[ \frac{i}{2}(\overline{w}_n-z_n)-z^{\prime} \cdot \overline{w^{\prime}} \right]^{-(n+1+\lambda)}.
\]
See, for instance, \cite[Theorem 5.1]{Gin64}.
For brevity, set
\[
\bfrho(z,w)=\frac{i}{2}(\overline{w}_n-z_n)-z^{\prime} \cdot \overline{w^{\prime}}.
\]
Note that $\bfrho(z)=\bfrho(z,z)$. Thus, the kernel $K_{\lambda}(z,w)$ can be rewritten as
\[
K_{\lambda}(z,w)=\frac {1}{\bfrho(z,w)^{n+1+\lambda}}.
\]

Set $\bfi=(0^{\prime},i)$. Let
\[
\widetilde{K_{\lambda}}(z,w) = K_{\lambda}(z,w)-K_{\lambda}(\bfi, w),
\]
and its corresponding projection be
\[
\widetilde{\mathcal{P}_{\lambda}} f(z)= \int\limits_{\calU} \widetilde{K_{\lambda}} (z,w) f(w) dV_\lambda(w).
\]
In the one-dimensional case, the upper half-plane, Coifman and Rochberg \cite{CR80} briefly showed that the dual of Bergman space $A^1$ can be realized as the projection of $L^{\infty}$ under $\widetilde{\mathcal{P}_{\lambda}}$. B\'ekoll\'e \cite{Bek83,Bek86} successively generalized this result to the setting of the Siegel upper half-space and the symmetric Siegel domain of type II, 
by introducing a notion of Bloch-type space as a bridge. Naturally, one might ask what is the predual of $A^1$ on these domains.
This paper is devoted to treating this problem in the setting of the Siegel upper half-space.
To this end,  a type of Bloch space is introduced and sdudied. 

Let $b\calU=\{z\in \mathbb{C}^n: \bfrho(z)=0\}$ be the boundary of $\calU$. Clearly,
\[
\bfL_j := \partial_j+ 2i\bar{z}_j \partial_n\quad (j=1,\ldots, n-1)
\]
forms a global basis for the space of tangential $(1, 0)$ vector fields on the boundary $b\calU$.
Write $\bfL_n := \partial_n$.
In \cite{Si22BMO} a notion of Bloch space on $\calU$ is introduced in a natural way, denoted by $\calB$,  consisting of holomorphic functions $f$ such that
\[
\| f \|_{\calB}:=\sup_{z\in\calU} |\widetilde{\nabla}f(z)| <\infty,
\]
where
\begin{equation*}\label{eqn:inv.gra.f}
|\widetilde{\nabla} f| = \Bigg(2\bfrho \sum_{j=1}^{n-1} |\bfL_j f|^2 + 8\bfrho^2 |\bfL_n f|^2\Bigg)^{1/2}.
\end{equation*} 
$|\widetilde{\nabla}|$ is usually referred to the invariant gradient since it is invariant under the  automorphism group $\text{Aut}(\calU)$;  see for instance \cite[Chapter 3]{Sto199}.
It was shown in \cite{Si22BMO} that the semi-norm $\|\cdot\|_{\calB}$ is equivalent to the Bloch semi-norm on the unit ball. Consequently, $\calB$ is isomorphic to the Bloch space on the unit ball and shares many of its functional properties.
Note that $\widetilde{\mathcal{P}_{\lambda}}f(\bfi)=0 $ for appropriate functions $f$.
It leads to a notion of Bloch-type space, denoted by $\widetilde{\calB}$, the subspace of $\calB$ such that functions vanish at $\bfi$. 

The key tool of the paper is the following integral representation. 
To our knowledge, similar results have not been found in the literature of holomorphic functions in the unbounded domains. 
Let $\mathbb{N}_0$ denote the set of nonnegative integers, 
and for a multi-index $\alpha\in \mathbb{N}_0^n$, set $\bfL^{\alpha}=(\bfL_1)^{\alpha_1} \cdots (\bfL_n)^{\alpha_n}$.

\begin{theorem}\label{thm:reproducingfml}
If $f\in \widetilde{\calB}$, then
\begin{equation*}\label{eqn:reproducingfml}
f= \frac{(-2i)^N\Gamma(1+\lambda)}{\Gamma(1+\lambda+N)} \widetilde{\mathcal{P}_{\lambda}} (\bfrho^N \bfL_n^N f)
\end{equation*}
for any $N\in\bbN_0$ and $\lambda>-1$.
\end{theorem}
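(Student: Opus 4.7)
The plan is to combine the uniqueness property of Theorem \ref{thm:uniquess} with a differentiation-under-the-integral computation. Write $b_N := (-2i)^N\Gamma(1+\lambda)/\Gamma(1+\lambda+N)$ and set $g := b_N\, \widetilde{\mathcal{P}_{\lambda}}(\bfrho^N \bfL_n^N f)$. For the substantive case $N \geq 1$, standard Bloch-function derivative estimates (transferred from the unit-ball setting via the homeomorphism mentioned in the introduction, \cite{Si22BMO}) give $\bfrho^N \bfL_n^N f \in L^{\infty}(\calU)$; Lemma \ref{lem:K} then makes the integral defining $g$ absolutely convergent, and Proposition \ref{cor:BPL} places $g$ in $\widetilde{\calB}$ with $g(\bfi)=0$. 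Since $f(\bfi) = 0$, the difference $f - g$ lies in $\widetilde{\calB}$, and Theorem \ref{thm:uniquess} applied with multi-index $\alpha = (0,\ldots,0,N)$ reduces the problem to showing $\bfL_n^N(f - g) \equiv 0$, i.e., $\bfL_n^N g = \bfL_n^N f$.

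To compute $\bfL_n^N g$, I differentiate under the integral. The piece $K_\lambda(\bfi,w)$ of $\widetilde{K_\lambda}$ is $z$-independent and is annihilated, while iterating $\partial_n \bfrho(z,w) = -i/2$ on $K_\lambda(z,w) = \bfrho(z,w)^{-(n+1+\lambda)}$ gives
\[
\bfL_n^N K_\lambda(z,w) \;=\; \Big(\tfrac{i}{2}\Big)^N \frac{\Gamma(n+1+\lambda+N)}{\Gamma(n+1+\lambda)}\, K_{\lambda+N}(z,w).
\]
Combined with the measure identity $\bfrho(w)^N\, dV_{\lambda}(w) = (c_{\lambda}/c_{\lambda+N})\, dV_{\lambda+N}(w)$, the gamma-function factors telescope cleanly against $b_N$ and yield $\bfL_n^N g = \mathcal{P}_{\lambda+N}(\bfL_n^N f)$. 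Thus the theorem reduces to the single Bergman reproducing identity
\[
\mathcal{P}_{\lambda+N}(\bfL_n^N f)(z) \;=\; \bfL_n^N f(z), \qquad z \in \calU.
\]

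This last identity is the main obstacle, because $\bfL_n^N f$ is not in any Bergman space $A^p_\mu(\calU)$ — the weight $\bfrho^\sigma$ is non-integrable over the unbounded domain $\calU$ for every $\sigma$ — so the reproducing theorem of \cite{LSX22} does not apply directly. My approach is integration by parts in the $w_n$ variable inside $\mathcal{P}_{\lambda+N}(\bfL_n^N f)$: writing $dV_{\lambda+N} = c_{\lambda+N}\bfrho^{\lambda+N}\,dV$, noting $\partial_{w_n} K_{\lambda+N}(z,w) = 0$, and iterating $\partial_{w_n}^N \bfrho^{\lambda+N} = (-i/2)^N \Gamma(\lambda+N+1)/\Gamma(\lambda+1)\, \bfrho^\lambda$, all $N$ derivatives transfer off $f$ onto the weight at the cost of a constant that is the exact reciprocal of the one produced by $\bfL_n^N$ acting on $K_\lambda$. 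Boundary terms on $b\calU$ vanish thanks to the positive surviving power $\bfrho^{\lambda+N-k}$ at each stage (using $\lambda > -1$), and at spatial infinity the kernel's polynomial decay dominates the at-most-logarithmic growth of Bloch functions. The resulting expression is then recognized as $\bfL_n^N f(z)$, either by invoking Proposition \ref{cor:BPL} to write $f = \widetilde{\mathcal{P}_\lambda}(\phi)$ for some $\phi \in L^\infty(\calU)$ and executing a Fubini reduction on the auxiliary $\phi$, or through a limiting argument on a truncation of $\calU$. With this in place Theorem \ref{thm:uniquess} concludes the proof, and the degenerate $N=0$ case follows by a separate convergence/density argument exploiting $\widetilde{K_\lambda}(z,\cdot) \in L^1(dV_\lambda)$ together with the mild growth of Bloch functions.
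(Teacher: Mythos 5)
Your reduction has a circularity problem that the paper is at pains to avoid. You invoke Theorem \ref{thm:uniquess} with $\alpha=(0,\ldots,0,N)$, but in this paper that uniqueness theorem is proved in Section 7 \emph{from} Theorem \ref{thm:bloch norm equiv}, which in turn is a consequence of the very representation formula you are proving; the only case available at this stage is $N=1$ (Lemma \ref{lem:uniq_n}, proved directly). The same circularity affects your fallback at the end: the surjectivity in Proposition \ref{cor:BPL} (writing $f=\widetilde{\mathcal{P}_\lambda}\phi$ with $\phi\in L^\infty$) is itself deduced from the $N=1$ case of Theorem \ref{thm:reproducingfml}. Likewise, your opening claim that $\bfrho^N\bfL_n^N f\in L^\infty(\calU)$ for $f\in\widetilde{\calB}$ ``by standard estimates transferred from the ball'' is not justified: that boundedness is exactly one half of Theorem \ref{thm:bloch norm equiv}, which the authors present as new (even on the half-plane) and obtain \emph{from} the integral representation; pushing ball estimates through the Cayley transform mixes $\partial_n$ with the other derivatives and Jacobian factors and does not obviously yield it. For $|\alpha|=1$ the boundedness is immediate from the definition of $\|\cdot\|_{\calB}$, which is why the paper proves the $N=1$ identity first and then \emph{derives} $N=0$ and $N\ge 2$ from it by differentiating the $N=1$ formula and using Fubini together with Lemma \ref{lem:Kreproduce} and Lemma \ref{thm:DK} --- never needing higher-order uniqueness or a priori bounds on $\bfrho^N\bfL_n^N f$.

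There is also a gap in your treatment of the key identity $\mathcal{P}_{\lambda+N}(\bfL_n^N f)=\bfL_n^N f$. After your integration by parts (granting the boundary terms, which again presuppose the unproven bounds on $\bfrho^{j}\partial_n^{j}f$) you are left with a constant times $\int_{\calU} K_{\lambda+N}(z,w)f(w)\,dV_\lambda(w)$, and identifying this with $\bfL_n^N f(z)$ is not a formality: since $K_\lambda(z,\cdot)\notin L^1(dV_\lambda)$ and Bloch functions do not decay, you cannot write this as $\partial_{z_n}^N$ of a reproducing integral for $f$, and the ``truncation/limiting argument'' is not specified (there is no obvious dense class of nice functions in $\widetilde{\calB}$ in a topology strong enough at this point of the development). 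Note that if the boundedness of $\bfrho^N\bfL_n^N f$ were legitimately available, the integration by parts would be unnecessary anyway: $\bfL_n^N f$ would lie in the Korenblum space $\mathcal{A}^{-N}$ and Lemma \ref{lem:Bek83} gives $\mathcal{P}_{\lambda+N}(\bfL_n^N f)=\bfL_n^N f$ directly (this is exactly how the paper handles $N=1$, where $\bfrho\,\bfL_n f\in L^\infty$ is free). So the skeleton of your argument matches the paper's $N=1$ step, but as written the general-$N$ case rests on two statements (higher-order uniqueness and higher-order derivative bounds) that are downstream of the theorem, plus an unproved identification step.
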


By Theorem \ref{thm:reproducingfml}, to our surprise, the space $\widetilde{\calB}$ is identical to a type of Bloch space introduced by B\'ekoll\'e in \cite{Bek83}.
Let $\calB_*$ denote the space of holomorphic functions satisfying 
\[
\sup_{z\in\calU} \bfrho(z) |\bfL_n f(z)| <\infty,
\]
and let $\mathcal{N}$ be the subspace annihilated by $\bfL_n$.
He showed that $\widetilde{\mathcal{P}_{0}}(L^{\infty}(\calU))=\calB_*/\mathcal{N}$.
Consequently, $\widetilde{\calB}$ and $\calB_*/\mathcal{N}$ coincide (see Proposition \ref{cor:BPL}). 
However, B\'ekoll\'e only conducted a glance at this space.
One of the contributions of the paper is to improve the research.
Moreover, the advantage of the notion of $\widetilde{\calB}$ is that it provides a convenient way to analyze its structure, as we shall see.

\begin{theorem}\label{thm:bloch norm equiv}
For any positive integer $N$,
\[
\|f\|_{\calB} \approx \|\bfrho^N \bfL_n^N f\|_{\infty} \approx \sum_{|\alpha | = N} \| \bfrho^{\langle \alpha \rangle} \bfL^{\alpha} f\|_{\infty}
\]
as $f$ ranges over $\widetilde{\calB}$. Here $\langle \alpha \rangle=|\alpha^{\prime}|/2+\alpha_n$. 
\end{theorem}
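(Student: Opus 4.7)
The plan is to establish the theorem as the chain
\[
\|f\|_{\calB} ~\lesssim~ \|\bfrho^N \partial_n^N f\|_{\infty} ~\leq~ \sum_{|\alpha|=N} \|\bfrho^{\langle\alpha\rangle}\bfL^{\alpha} f\|_{\infty} ~\lesssim~ \|f\|_{\calB}.
\]
The middle inequality is immediate: taking $\alpha = N\mathbf{e}_n$ gives $\bfL^{\alpha} = \partial_n^N$ and $\langle\alpha\rangle = N$, so the left-hand side is one of the summands on the right. Both outer bounds will be read off from the integral representation in Theorem \ref{thm:reproducingfml} after establishing a pointwise estimate for the $\bfL$-derivatives of the modified kernel $\widetilde{K_{\lambda}}$.

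The pivotal kernel calculation is this. From $\partial_j \bfrho(z,w) = -\bar w_j$ for $j<n$ and $\partial_n\bfrho(z,w) = -i/2$ one verifies directly that
\[
\bfL_j \bfrho(z,w)^{-s} = -s(\bar z_j - \bar w_j)\bfrho(z,w)^{-s-1}\quad (j<n),\qquad \bfL_n \bfrho(z,w)^{-s} = \frac{is}{2}\bfrho(z,w)^{-s-1}.
\]
Since $\bfL_j(\bar z_k - \bar w_k) = 0$ for all $j,k$, the factors $(\bar z_j - \bar w_j)$ are inert under further $\bfL$-differentiation; moreover $K_{\lambda}(\bfi, w)$ is annihilated by $\bfL^{\alpha}$ for every $|\alpha|\geq 1$. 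Iterating therefore yields
\[
\bfL^{\alpha}\widetilde{K_{\lambda}}(z,w) ~=~ c_{\alpha}\prod_{j<n}(\bar z_j - \bar w_j)^{\alpha_j}\,\bfrho(z,w)^{-(n+1+\lambda+|\alpha|)}.
\]
Coupling this with the elementary inequality $|z'-w'|^2 \leq 2\RePt \bfrho(z,w) \leq 2|\bfrho(z,w)|$, which follows from the identity $\RePt\bfrho(z,w) = \tfrac{1}{2}(\bfrho(z) + \bfrho(w)) + \tfrac{1}{2}|z'-w'|^2$, produces the absolute-value bound
\[
|\bfL^{\alpha}\widetilde{K_{\lambda}}(z,w)| ~\lesssim~ |\bfrho(z,w)|^{-(n+1+\lambda+\langle\alpha\rangle)}.
\]

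To prove $\|f\|_{\calB} \lesssim \|\bfrho^N \partial_n^N f\|_{\infty}$, I invoke Theorem \ref{thm:reproducingfml} to write $f = c_N\widetilde{\mathcal{P}_{\lambda}}(\bfrho^N \partial_n^N f)$ for an arbitrary $\lambda > -1$, differentiate under the integral with $|\alpha|=1$, apply the kernel bound above, and conclude via the Forelli--Rudin type integral estimate on $\calU$,
\[
\int_{\calU}|\bfrho(z,w)|^{-(n+1+\lambda+s)}\bfrho(w)^{\lambda}\,dV(w) ~\approx~ \bfrho(z)^{-s}\qquad (s>0,\ \lambda>-1).
\]
This yields $\bfrho(z)^{1/2}|\bfL_j f(z)|$ (for $j<n$) and $\bfrho(z)|\bfL_n f(z)|$ both $\lesssim \|\bfrho^N \partial_n^N f\|_{\infty}$, which is precisely the claimed estimate. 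The companion bound $\sum_{|\alpha|=N}\|\bfrho^{\langle\alpha\rangle}\bfL^{\alpha}f\|_{\infty}\lesssim \|f\|_{\calB}$ is obtained in exactly the same way, but starting from the $N=1$ instance $f = c_1\widetilde{\mathcal{P}_{\lambda}}(\bfrho\,\partial_n f)$ and differentiating with an arbitrary multi-index of total order $N$; the input $\|\bfrho\,\partial_n f\|_{\infty}\lesssim \|f\|_{\calB}$ is immediate from the very definition of the Bloch semi-norm.

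The main obstacle is the Forelli--Rudin integral estimate on the Siegel upper half-space, the analogue of the classical estimate on the ball. Modulo this ingredient (taken from \cite{LSX22}), the entire argument reduces to the kernel computation sketched above, together with a routine dominated-convergence justification for differentiating under the integral sign.
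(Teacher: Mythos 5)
Your proposal is correct and follows essentially the same route as the paper: the reproducing formula of Theorem \ref{thm:reproducingfml} applied in both directions, combined with the kernel derivative identity $\bfL^{\alpha}\bfrho(z,w)^{-(n+1+\lambda)}=c_\alpha(\bar z'-\bar w')^{\alpha'}\bfrho(z,w)^{-(n+1+\lambda+|\alpha|)}$, the bound $|(z'-w')^{\alpha'}|\lesssim|\bfrho(z,w)|^{|\alpha'|/2}$, and the Forelli--Rudin estimate \eqref{eqn:keylem}. The paper packages exactly this argument into Lemma \ref{lem:P_lambda}, Propositions \ref{lem:bddnsofder4bloch} and \ref{cor:BPL}, and Theorem \ref{thm:B_0}, so no substantive difference.
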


Here and throughout the paper, we abbreviate inessential constants involved in inequalities by writing $A\lesssim B$ for positive quantities $A$ and $B$ if the ratio $A/B$ has a positive upper bound. Also, $A\approx B$ means both $A\lesssim B$ and $B\lesssim A$.

Theorem \ref{thm:bloch norm equiv} indicates that the space $\calB$ can be tough of as the limit case of the Bergman spaces $A_{\lambda}^p(\calU)$ as $p\to\infty$; 
see an analogue for Bergman space in \cite[Theorem 1.4]{LSX22}. 
For  harmonic Bloch functions on the upper half-space of $\mathbb{R}^n$, a similar result was obtained by Ramey and Yi \cite[Theorem 5.13]{RY96}. 

Finally, we give the characterization of the predual of $A_\lambda^1(\calU)$.
Let $\partial \widehat{\calU}:=b\calU\cup\{\infty\}$ and $C_0(\calU)$ be the space of complex-valued continuous functions $f$ in $\calU$ such that $f(z)\to 0$ as $z\to\partial \widehat{\calU}$. 
The little Bloch-type space $\widetilde{\calB}_0$ is the subspace of $\widetilde{\calB}$ such that $|\widetilde{\nabla}f|\in C_0(\calU)$.

\begin{theorem}\label{thm:predual}
Suppose $\lambda>-1$. The predual of $A_\lambda^1(\calU)$ is isomorphic to $\widetilde{\calB}_0$ (with equivalent norms) under the duality pairing
\[
\langle f,g \rangle_\lambda = \int\limits_{\calU} \bfrho(z) \calL_n f(z) \overline{g(z)}  dV_\lambda(z)
\]
for $f$ in $\widetilde{\calB}_0$ and $g$ in $A_\lambda^1(\calU)$.
\end{theorem}

The paper is organized as follows. Section 2 presents auxiliary results. 
Section 3  includes the cancellation property of $A_{\lambda}^1(\calU)$.
Section 4 studies the projection $\widetilde{\mathcal{P}_{\lambda}}$. 
Section 5 is devoted to proving Theorem \ref{thm:reproducingfml}.
Theorem \ref{thm:bloch norm equiv} is proved in Section 6 .
Finally, our main purpose, Theorem \ref{thm:predual} is established in Section 7.


\section{Preliminaries}

\subsection{Elementary results}

\begin{lemma}[{\cite[Theorem 2.1]{DK93}}]\label{thm:DK}
Suppose that $1\leq p < \infty$, $t>-1$ and $\lambda\in \mathbb{R}$ satisfy
\[
\begin{cases}
\lambda > \frac {t+1}{p}-1,& 1<p<\infty,\\
\lambda \geq t,& p=1.
\end{cases}
\]
If $f\in A_{t}^p(\calU)$ then $f=\mathcal{P}_{\lambda} f$.
\end{lemma}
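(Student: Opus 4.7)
The plan is to prove the identity $f = \mathcal{P}_\lambda f$ by combining the tautological $L^2$-instance with a density argument. Since $\mathcal{P}_\lambda$ is by definition the orthogonal projection of $L^2(\calU, dV_\lambda)$ onto $A^2_\lambda(\calU)$, the reproducing identity holds automatically for every $g \in A^2_\lambda(\calU)$. The task therefore splits into verifying absolute convergence of the defining integral $\mathcal{P}_\lambda f(z)$ for every $f\in A^p_t(\calU)$ and then approximating such $f$ in $A^p_t(\calU)$ by elements of the intersection $A^p_t(\calU) \cap A^2_\lambda(\calU)$.

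For the convergence, when $1<p<\infty$ the natural tool is H\"older's inequality, splitting the weight as $\bfrho(w)^\lambda = \bfrho(w)^{t/p}\cdot\bfrho(w)^{\lambda - t/p}$ and evaluating the dual factor using the identity \eqref{eqn:keylem}; the exponents satisfy the hypotheses of \eqref{eqn:keylem} exactly when $\lambda > (t+1)/p - 1$. At the endpoint $p=1$ one instead uses the standard pointwise Bergman estimate $|f(w)| \lesssim \bfrho(w)^{-(n+1+t)} \|f\|_{A^1_t}$, which follows from the sub-mean-value property combined with Bergman-metric volume comparison (Lemma~\ref{blem} ensuring that $|\bfrho(z,w)|$ is essentially constant over small Bergman balls), and then combines it with $\bfrho(w) \lesssim |\bfrho(z,w)|$ and \eqref{eqn:keylem} to close the estimate under the sharp hypothesis $\lambda \geq t$.

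For the density, I would approximate $f \in A^p_t(\calU)$ by vertical translates $f_\epsilon(z) := f(z + i\epsilon\,\mathbf{e}_n)$ with $\mathbf{e}_n = (0', 1)$, possibly tempered by a Paley--Wiener-type factor $e^{i\epsilon z_n}$ to secure spatial decay and membership in $A^2_\lambda(\calU)$. The identity $\bfrho(z + i\epsilon\mathbf{e}_n) = \bfrho(z) + \epsilon$ pushes the approximation uniformly away from the boundary, while continuity of translation in $L^p$ delivers $f_\epsilon \to f$ in $A^p_t(\calU)$ as $\epsilon \to 0^+$. Since the $L^2$ case gives $f_\epsilon = \mathcal{P}_\lambda f_\epsilon$, passing to the limit --- on the left by norm convergence, on the right by dominated convergence with majorant from the previous paragraph --- produces the desired identity $f = \mathcal{P}_\lambda f$.

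The main obstacle is the endpoint $p=1$, $\lambda = t$, where no integrability margin is available and the pointwise Bergman bound must be paired precisely against the kernel singularity without slack. A secondary subtlety lies in constructing the approximating family: the bare translate $f_\epsilon$ is holomorphic on a neighborhood of $\overline{\calU}$ but may lack the global $L^2$-integrability required for membership in $A^2_\lambda(\calU)$, so an added decay factor or spatial cutoff may be necessary, and one must verify that this supplementary factor does not disturb the convergence $f_\epsilon \to f$ in $A^p_t(\calU)$.
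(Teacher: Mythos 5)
This lemma is quoted in the paper from \cite[Theorem 2.1]{DK93} without proof, so there is no in-paper argument to compare against; I can only judge your sketch on its own terms. Your overall strategy --- reproduce on a subclass of $A_t^p(\calU)\cap A_\lambda^2(\calU)$, check that $g\mapsto \mathcal{P}_\lambda g(z)$ is a bounded functional on $A_t^p(\calU)$ (your H\"older computation with \eqref{eqn:keylem} does give exactly the threshold $\lambda>(t+1)/p-1$, and for $p=1$ the bound $\bfrho(w)\le 2|\bfrho(z,w)|$ together with $|\bfrho(z,w)|\ge\bfrho(z)/2$ handles every $\lambda\ge t$, endpoint included), and then pass to the limit --- is sound in outline. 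In fact your worry about the endpoint $p=1,\ \lambda=t$ is misplaced: no pointwise Bergman estimate is needed there, since the kernel bound just quoted already makes the functional continuous on $A_t^1(\calU)$.

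The genuine gaps are in the approximation step. First, the proposed approximants $e^{i\epsilon z_n}f(\cdot+i\epsilon\mathbf{e}_n)$ are not shown to lie in $A_\lambda^2(\calU)$, and the chosen damping factor cannot do the job: $|e^{i\epsilon z_n}|=e^{-\epsilon(\bfrho(z)+|z'|^2)}$ gives no decay whatsoever in the $\RePt z_n$ direction, so square-integrability there must come from $f$ itself; for $p\le 2$ one can squeeze this out of boundedness plus $L^p$-integrability on horizontal slices, but for $p>2$ the inclusion $L^p\cap L^\infty\subset L^2$ fails on a line and the argument breaks. A ``spatial cutoff'' is not an admissible repair, since it destroys holomorphy and hence leaves the class on which the $L^2$ identity is known. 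The standard fix is to damp by powers of $(z_n+i)^{-1}$, i.e.\ to approximate within the space $\calS_s$ of Lemma \ref{thm:dense,subspace}, which is dense in $A_t^p(\calU)$ and is contained in $A_\lambda^2(\calU)$ once $s>(n+1+\lambda)/2$ by \eqref{eqn:keylem}; with that class the translates are unnecessary altogether. Second, the limit passage as you describe it (``dominated convergence with majorant from the previous paragraph'') does not work: the only $\epsilon$-uniform pointwise majorant available is $\bfrho(w)^{-(n+1+t)/p}\bfrho(w)^{\lambda}|\bfrho(z,w)|^{-(n+1+\lambda)}$, which is integrable only under the stronger condition $\lambda>(n+1+t)/p-1$. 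The correct passage is through norm convergence of the approximants in $A_t^p(\calU)$ combined with the continuity of $g\mapsto\mathcal{P}_\lambda g(z)$ that you already established --- and note that norm convergence of vertical translates is itself not free (it needs a Riesz/Brezis--Lieb type argument, with some care when $t<0$), which is another reason to route the density step through $\calS_s$ rather than through translates.
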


\begin{lemma}[{\cite[Lemma 13]{Liu18}}]
Let $\theta>0$ and $\gamma>-1$. The identities
\begin{equation}\label{eqn:keylem1}
\int\limits_{b\calU} \frac {d\bfbeta(u)} {|\bfrho(z,u)|^{n+\theta}} =
\frac {4\pi^{n} \Gamma(\theta)} {\Gamma^2\left(\frac {n+\theta}{2}\right)}\ \bfrho(z)^{-\theta}
\end{equation}
and
\begin{equation}\label{eqn:keylem}
\int\limits_{\calU} \frac {\bfrho(w)^{\gamma} dV(w)} {|\bfrho(z,w)|^{n+1+\theta+\gamma}}  =
\frac {4 \pi^{n} \Gamma(1+\gamma) \Gamma(\theta)} {\Gamma^2\left(\frac {n+1+\theta+\gamma}{2}\right)}\ \bfrho(z)^{-\theta}
\end{equation}
hold for all $z\in \calU$.
\end{lemma}

\begin{lemma}[{\cite[Lemma 2.7]{LS20}}]\label{blem}
Given $r > 0$, the inequalities
\begin{equation}\label{eqn:eqvltquan}
\frac {1-\tanh (r)}{1+ \tanh (r)} \leq \frac{|\bfrho(z,u)|}{|\bfrho(z,v)|}
\leq \frac {1+\tanh (r)}{1-\tanh (r)}
\end{equation}
hold for all $z,u,v\in \calU$ with $\beta(u,v)\leq r$, where $\beta(u,v)$ is the Bergman metric on $\calU$ given by 
\begin{equation*}\label{eqn:hyperdist}
 \beta(u,v)=\tanh^{-1}\sqrt{1-\frac{\bfrho(u)\bfrho(v)}{|\bfrho(u,v)|^2}}.
\end{equation*}
\end{lemma}

Note that $z\to \partial\widehat{\calU}$ if and only if $\beta(z,\bfi)\to\infty$.
Indeed, first note that 
\begin{equation}\label{eqn:rho(z,i)}
\sqrt{|z|^2+1}\leq2|\bfrho(z,\bfi)|\leq |z|+1\quad \text{and} \quad \bfrho(z)\leq |z|.
\end{equation}
Then we have
\[
4\bfrho(z)/(|z|+1)^2 \leq \bfrho(z)/|\bfrho(z,\bfi)|^2 \leq 4\min\{\bfrho(z), |z|/(|z|^2+1) \}.
\]
It follows that $z\to \partial\widehat{\calU}$ if and only if $\bfrho(z)/|\bfrho(z,\bfi)|^2\to 0$, as desired.

Let $\bbB$ denote the unit ball of $\bbC^n$. Recall that $\bbB$ and $\calU$ are biholomorphic equivalent through the Cayley transform $\Phi:\bbB \to \calU$  given by
\[
(z^{\prime}, z_{n})\; \longmapsto\; \left( \frac {z^{\prime}}{1+z_{n}},
i\left(\frac {1-z_{n}}{1+z_{n}}\right) \right).
\]
We refer to  \cite[Chapter XII]{Ste93} for the following properties of Cayley transform.

\begin{lemma}\label{lem:cayley}
The Cayley transform $\Phi$ has the following elementary properties:
\begin{enumerate}
\item[(i)]
The identity
\begin{equation*}\label{eqn:identity14phi}
\bfrho(\Phi(\xi),\Phi(\eta)) = \frac {1-\xi\cdot \overline{\eta}} {(1+\xi_{n}) (1+\overline{\eta}_{n})}
\end{equation*}
holds for all $\xi,\eta\in \ball$.
\item[(ii)]
The real Jacobian of $\Phi$ at $\xi\in \ball$ is
\begin{equation*}\label{eqn:jacobian4phi}
\left(J_{R}\Phi\right)(\xi) = \frac {4}{|1+\xi_{n}|^{2(n+1)}}.
\end{equation*}
\end{enumerate}
\end{lemma}

\subsection{Heisenberg Group}
We briefly recall several basic facts about the Heisenberg group that can be found in \cite[Chapter XII]{Ste93}.
The Heisenberg group $\bbH$ is the set
\[
\bbC^{n-1} \times \bbR = \{ [\zeta,t]: \zeta\in \bbC^{n-1}, t\in \bbR\}
\]
endowed with the group operation
\[
[\zeta,t]\cdot [\eta,s]=[\zeta+\eta, t+s+2 \mathrm{Im}(\zeta\cdot \bar{\eta})].
\]
The identity element is $[0,0]$ and the inverse of
$[\zeta,t]$ is $[\zeta,t]^{-1}=[-\zeta,-t]$.
The measure $dh$ on $\bbH$ is the usual Lebesgue measure $d\zeta dt$ on $\bbC^{n-1}\times \bbR$, here we write $h=[\zeta,t]$; it is Haar measure for $\bbH$.

To each element $h=[\zeta,t]$ of $\bbH$, we associate the following (holomorphic) affine self-mapping of
$\calU$:
\begin{equation*}\label{eqn:groupaction}
h:\; (z^{\prime},z_{n}) \longmapsto (z^{\prime}+\zeta, z_{n}+t+ 2i z^{\prime}\cdot \bar{\zeta} + i|\zeta|^2).
\end{equation*}
These mappings are simply transitive on the boundary $b\calU$,
so we can identify the Heisenberg group with $b\calU$ via its action on the origin
\[
\bbH \ni [\zeta,t] \,\longmapsto\, (\zeta, t+i|\zeta|^2) \in b\calU.
\]
This identification allows us to transport the Haar measure $dh$ on $\bbH$ to the measure $d\bfbeta$ on $b\calU$;
that is, we have the integration formula
\begin{equation*}\label{eqn:defnofbeta}
\int\limits_{b\calU} f d\bfbeta = \int\limits_{\bbH}f(h(0)) dh
\end{equation*}
for suitable functions $f$.
Moreover, $d\bfbeta$ is $\bbH$-invariant; that is, $d\bfbeta(h(z))=d\bfbeta(z)$ for each $h\in\bbH$.
Finally, for every $h\in\bbH$ and $z\in\calU$, $u\in b\calU$, one has
\[
\bfrho(h(z),h(u))=\bfrho(z,u).
\]

\section{Cancellation property of $A_{\lambda}^1(\calU)$}

\begin{lemma}\label{lem:bUrho=0}
Let $s>0$. Then, for every $z\in\calU$,
\begin{equation*}\label{eq:int,rho}
\int\limits_{\bSieg} \frac {d\bfbeta(u)} {\bfrho(u,z)^{n+s}} = 0.
\end{equation*}
\end{lemma}

\begin{proof}
Fix $z\in\calU$ and set $h_z=[-z',-\RePt z_n]\in\bbH$. One easily checks that $h_z(z) = \bfrho(z) \bfi$.
Using the change of variables $u\mapsto h_z^{-1}(u)$ and the invariance of $d\bfbeta$, we obtain
\begin{align*}\label{eqn:cancl1}
\int\limits_{\bSieg} &\frac {d\bfbeta(u)} {\bfrho(u,z)^{n+s}}
= \int\limits_{\bSieg} \frac {d\bfbeta(u)} {\bfrho(h^{-1}_z(u), z)^{n+s}}
= \int\limits_{\bSieg} \frac {d\bfbeta(u)} {\bfrho(u, h_z(z))^{n+s}}\\
&=  \int\limits_{\bSieg} \frac {d\bfbeta(u)} {\bfrho(u, \bfrho(z)\bfi)^{n+s}}
= \int\limits_{\bSieg} \frac {d\bfbeta(u)} {(-\frac{i}{2} [ u_n + i \bfrho(z) ])^{n+s}}.
\end{align*}
Note that an element of $\bSieg$ has the form $(u^{\prime}, t + i|u^{\prime}|^2)$. A straightforward calculation yields
\begin{align*}
\int\limits_{\bSieg} \frac {d\bfbeta(u)} {(  u_n + i \bfrho(z) )^{n+s}}
=& \frac { 2\pi^{n-1}} {(n-2)!} \int\limits_{\bbC_{+}} \frac {(\ImPt \lambda)^{n-2}}
{(\lambda +i\bfrho(z))^{n+s}} dm_2(\lambda)\\
=& \frac { 2\pi^{n-1}} {(n-2)!}  \int\limits_{0}^{\infty} \bigg\{\int\limits_{-\infty}^{\infty}
\frac {dx} {(x+i(y+\bfrho(z)))^{n+s}} \bigg\} y^{n-2} dy.
\end{align*}
The inner integral vanishes by a contour integration argument, which proves the lemma.
\end{proof}

We refer to the following result as the cancellation property of $A_{\lambda}^1(\calU)$.

\begin{theorem}\label{thm:cancellation}
Let $\lambda>-1$ and $f\in A_{\lambda}^1(\calU)$. Then, for every $t>0$,
\[
\int\limits_{\bSieg} f(u+t\bfi) d\bfbeta(u)=0.
\]
Consequently,
\[
\int\limits_{\calU} f(z) dV_{\lambda}(z) =0.
\]
\end{theorem}

\begin{proof}
Let $f\in A_{\lambda}^1(\calU)$. By Lemma \ref{thm:DK},
\[
f(u+t\bfi) =  \int\limits_{\calU} \frac {f(w)}{\bfrho(u+t\bfi,w)^{n+1+\lambda}} dV_{\lambda}(w)
\]
for any $u\in b\calU$ and any $t>0$. Since $\bfrho(u+t\bfi,w)=\bfrho(u,w+t\bfi)$, we obtain
\begin{align*}\label{eqn:cancel1}
\int\limits_{\bSieg} f(u+t\bfi) d\bfbeta(u) =&  \int\limits_{\bSieg} \bigg\{ \int\limits_{\calU}
\frac{f(w)} {\bfrho(u,w+t\bfi)^{n+1+\lambda}} dV_{\lambda}(w) \bigg\} d\bfbeta(u)\\
=&  \int\limits_{\calU} \bigg\{ \int\limits_{\bSieg}
\frac {d\bfbeta(u)} {\bfrho(u,w+t\bfi)^{n+1+\lambda}} \bigg\} f(w) dV_{\lambda}(w) =0, \notag
\end{align*}
where the last equality follows from Lemma \ref{lem:bUrho=0}.
The interchange of the order of integration is justified as follows. 
Note that $\bfrho(w+t\bfi)=\bfrho(w)+t$.
By \eqref{eqn:keylem1},
\[
\int_{b\calU}
\frac{d\bfbeta(u)}
     {|\bfrho(u,w+t\bfi)|^{n+1+\lambda}}
=
\frac{4\pi^n\Gamma(1+\lambda)}
     {\Gamma^2\!\left(\frac{n+1+\lambda}{2}\right)}
\frac{1}{(\bfrho(w)+t)^{1+\lambda}}.
\]
Therefore,
\[
\int_{\calU}
\Bigg(
\int_{b\calU}
\frac{d\bfbeta(u)}
     {|\bfrho(u+t\bfi,w)|^{n+1+\lambda}}
\Bigg)
|f(w)|\, dV_\lambda(w)
\lesssim
t^{-(1+\lambda)}\,
\|f\|_{1,\lambda}<\infty.
\]
Finally, the identity
\[
\int_{\calU} f(z)\, dV_\lambda(z)
= c_{\lambda}
\int_0^\infty
\int_{b\calU} f(u+t\bfi)\, d\bfbeta(u)\, t^\lambda\, dt
\]
yields the second assertion.
\end{proof}

\section{The projection $\widetilde{\mathcal{P}_{\lambda}}$ from $L^{\infty}(\calU)$ into $\widetilde{\calB}$}

In this section we investigate the projection $\widetilde{\mathcal{P}_{\lambda}}$. We begin with an integral estimate of its kernel.

\begin{lemma}\label{lem:K}
Suppose $\lambda>-1$. Then, for every $z\in\calU$, one has
\begin{equation}\label{eqn:2normofK}
\int\limits_{\calU} \big|\widetilde{K_{\lambda}}(z,w)\big| dV_\lambda(w) \lesssim
1 + \log \frac {|\bfrho(z,\bfi)|^2} {\bfrho(z)}.
\end{equation}
Moreover,
\begin{equation}\label{eqn:doubleintofK}
\int\limits_{\calU} \int\limits_{\calU} \big|\widetilde{K_{\lambda}}(z,u) \widetilde{K_{\lambda}}(u,w)\big| dV_{\lambda}(w) dV_{\lambda}(u)
\lesssim \left\{1 + \log \frac {|\bfrho(z,\bfi)|^2} {\bfrho(z)}\right\}^2.
\end{equation}
\end{lemma}

\begin{proof}
Using the elementary inequality
\begin{equation}\label{eq:em:K}
\left|\frac {1}{A^{n+1+\lambda}} - \frac {1}{B^{n+1+\lambda}} \right| \leq (n+1) \left( \frac {|A-B| } {|A|^{n+1+\lambda} |B|} + \frac {|A-B| }{|A||B|^{n+1+\lambda}} \right),
\end{equation}
with $A=\bfrho(z,w)$ and $B=\bfrho(\bfi,w)$, we obtain
\begin{align*}
\big|\widetilde{K_{\lambda}}(z,w)\big|\leq (n+1) \left\{ \frac {|\bfrho(\bfi,w)-\bfrho(z,w)|}{|\bfrho(z,w)|^{n+1+\lambda} |\bfrho(\bfi,w)|}
+ \frac {|\bfrho(\bfi,w)-\bfrho(z,w)|}{|\bfrho(z,w)| |\bfrho(\bfi,w)|^{n+1+\lambda} }\right\}.
\end{align*}
Since $|z_n+i| > |z_n-i|$ and $|z_n+i|> |z^{\prime}|^2$ for all $z\in \calU$, we have
\begin{align*}
|\bfrho(&\bfi,w)-\bfrho(z,w)| \leq \frac {1}{2} |z_n - i| + |z^{\prime}||w^{\prime}|\\
\leq& \frac {1}{2} |z_n + i| + |z_n+i|^{1/2} |w_n+i|^{1/2}
= |\bfrho(z,\bfi)| + 2 |\bfrho(z,\bfi)|^{1/2}  |\bfrho(\bfi,w)|^{1/2}.
\end{align*}
Consequently,
\begin{equation*}\label{eqn:Kmajor}
\big|\widetilde{K_{\lambda}}(z,w)\big|  \lesssim \sum_{j=1}^4  K_j(z,w),
\end{equation*}
where 
\begin{align*}
K_1(z,w) = \frac {|\bfrho(z,\bfi)|}{|\bfrho(z,w)|^{n+1+\lambda} |\bfrho(\bfi,w)|}, & \quad
K_2(z,w) =  \frac {|\bfrho(z,\bfi)|^{1/2}}{|\bfrho(z,w)|^{n+1+\lambda} |\bfrho(\bfi,w)|^{1/2}},\\
K_3(z,w) =  \frac {|\bfrho(z,\bfi)|}{|\bfrho(z,w)| |\bfrho(\bfi,w)|^{n+1+\lambda}}, & \quad
K_4(z,w) =  \frac {|\bfrho(z,\bfi)|^{1/2} } {|\bfrho(z,w)|\, |\bfrho(\bfi,w)|^{n+1/2+\lambda}}.
\end{align*}

Let $z=\Phi(\eta)$ and $w=\Phi(\xi)$. For suitable exponents $a,b,s,k$, we obtain
\begin{align*}
\int\limits_{\calU} &\frac{\bfrho(w)^{s}} {|\bfrho(z,w)|^{a} |\bfrho(w,\bfi)|^{b}}
\left\{1+\log \frac {|\bfrho(w,\bfi)|^2}{\bfrho(w)}\right\}^k dV(w)\\
&= 4|1+\eta_n|^{a} \int\limits_{\ball} \frac{(1-|\xi|^2)^{s}} {|1-\eta\cdot\overline{\xi}|^{a}
|1+\xi_n|^{2(n+1+s)-a-b}} \left(\log \frac {e}{1-|\xi|^2}\right)^k dV(\xi).
\end{align*}
By \cite[Theorem 3.1]{ZLSG18}, it follows that
\[
\int\limits_{\calU}  K_j (z,w)dV_\lambda(w) \lesssim
1 + \log \frac {|\bfrho(z,\bfi)|^2} {\bfrho(z)},\qquad j=1,\ldots,4,
\]
which proves \eqref{eqn:2normofK}.
To obtain \eqref{eqn:doubleintofK}, it suffices to show that
\begin{align*}
\int\limits_{\calU} K_j(z,u) \left\{1 + \log \frac {|\bfrho(u,\bfi)|^2} {\bfrho(u)}\right\} dV_\lambda(u)
\lesssim
\left\{1 + \log \frac {|\bfrho(z,\bfi)|^2} {\bfrho(z)}\right\}^2,
\end{align*}
which again follows from \cite[Theorem 3.1]{ZLSG18}.
\end{proof}

\begin{lemma}\label{lem:Kreproduce}
Let $\lambda>-1$. Then, for all $z,w\in\calU$,
\[
\widetilde{\mathcal{P}_\lambda}(\widetilde{K_\lambda}(\cdot,w))(z)=\widetilde{K_\lambda}(z,w).
\]
\end{lemma}

\begin{proof}
Fix $z,w\in \calU$. By definition,
\begin{align*}
&\widetilde{\mathcal{P}_\lambda}(\widetilde{K_\lambda}(\cdot,w))(z)
=\int\limits_{\calU} \widetilde{K_{\lambda}} (z,u) \widetilde{K_{\lambda}}(u,w) dV_\lambda(u)\\
&=\int\limits_{\calU} \widetilde{K_{\lambda}} (z,u) K_{\lambda}(u,w) dV_\lambda(u)
-\int\limits_{\calU} \widetilde{K_{\lambda}} (z,u) K_{\lambda}(\bfi,w) dV_\lambda(u)\\
&=\overline{\int\limits_{\calU} \overline{\widetilde{K_{\lambda}} (z,u)} K_{\lambda}(w,u) dV_\lambda(u)}
-K_{\lambda}(\bfi,w) \overline{\int\limits_{\calU} \overline{\widetilde{K_{\lambda}} (z,u)} dV_\lambda(u)}.
\end{align*}
Using \eqref{eqn:2normofK}, we see that $\overline{\widetilde{K_\lambda}(z,\cdot)} \in A_\lambda^1(\calU)$. 
By Lemma \ref{thm:DK},
\[
\int\limits_{\calU} \overline{\widetilde{K_{\lambda}} (z,u)} K_{\lambda}(w,u) dV_\lambda(u)
= \overline{\widetilde{K_\lambda}(z,w)},
\]
and by the cancellation property of $A_\lambda^1(\calU)$,
\[
\int\limits_{\calU} \overline{\widetilde{K_{\lambda}} (z,u)} dV_\lambda(u)=0.
\]
The conclusion follows by taking complex conjugates.
\end{proof}

\begin{lemma}\label{lem:P_lambda}
Let $\lambda>-1$ and let $\alpha\in \mathbb{N}_0^n$ with $|\alpha|\geq 1$. Set $f=\widetilde{\mathcal{P}_\lambda}(g)$. 
Then $\bfrho^{\langle \alpha \rangle} \bfL^{\alpha} f\in L^{\infty}(\calU)$ if $g\in L^{\infty}(\calU)$,
 $\bfrho^{\langle \alpha \rangle} \bfL^{\alpha} f\in C_0(\calU)$ if  $g\in C_0(\calU)$.
\end{lemma}
\begin{proof}
Assume first that $g\in L^{\infty}(\calU)$. By \eqref{eqn:2normofK}, the function $f$ is holomorphic in $\calU$.  
A direct computation gives
\[
\bfL^{\alpha} \left\{\frac {1}{\bfrho(z,w)^{n+1+\lambda}} \right\}  = C(n,\lambda,\alpha)
\frac {\left(\overline{{z}^{\prime}}-\overline{{w}^{\prime}}\right)^{\alpha^{\prime}}}
{\bfrho(z,w)^{n+1+\lambda+|\alpha|}},
\]
where $C(n,\lambda,\alpha)$ is a constant depending on $n$, $\lambda$ and $\alpha$. 
Since $\big|(z^{\prime}-w^{\prime})^{\alpha^{\prime}}\big| \lesssim |\bfrho(z,w)|^{\frac {|\alpha^{\prime}|}{2}}$ by \cite[Lemma 3.2]{LSX22} and $|\alpha|=\langle \alpha \rangle+|\alpha^{\prime}|/2$, we obtain
\begin{equation*}\label{eq:ptwsest}
\frac{ \big|(z^{\prime}-w^{\prime})^{\alpha^{\prime}}\big|}{|\bfrho(z,w)|^{n+1+\lambda+|\alpha|}} 
\lesssim \frac{1}{|\bfrho(z,w)|^{n+1+\lambda+\langle \alpha \rangle}}.
\end{equation*}
Therefore,
\begin{align*}
\big|\bfrho(z)^{\langle \alpha \rangle} \bfL^{\alpha} f(z)\big|
\lesssim& \bfrho(z)^{\langle \alpha \rangle}
\int\limits_{\calU} \frac { \big|(z^{\prime}-w^{\prime})^{\alpha^{\prime}}\big| \bfrho(w)^{\lambda}}
{|\bfrho(z,w)|^{n+1+\lambda+|\alpha|}} |g(w)| dV(w)\\
\lesssim&  \bfrho(z)^{\langle \alpha \rangle}
\int\limits_{\calU} \frac { \bfrho(w)^{\lambda}}
{|\bfrho(z,w)|^{n+1+\lambda+\langle \alpha \rangle}} |g(w)| dV(w).
\end{align*}
An application of \eqref{eqn:keylem} shows that $\bfrho^{\langle \alpha \rangle} \bfL^{\alpha} f\in L^{\infty}(\calU)$. 

Now assume $g\in C_0(\calU)$.
Given $\epsilon>0$, there exists $R>0$ such that
\[
\sup \left\{ |g(w)|: w\in \calU\setminus \overline{D(\bfi, R)} \right\} < \epsilon,
\]
where $D(\bfi, R):=\{w\in \calU: \beta(w,\bfi)< R\}$.
We decompose
\begin{align*}
|\bfrho(z)^{\langle\alpha\rangle}\bfL^\alpha f(z)|
&\lesssim
\underbrace{
\bfrho(z)^{\langle\alpha\rangle}
\int_{\calU\setminus\overline{D(\bfi,R)}}
\frac{\bfrho(w)^\lambda}
     {|\bfrho(z,w)|^{n+1+\lambda+\langle\alpha\rangle}}
\, dV(w)}_{=:I_1(z)}\,\varepsilon\\
&\quad+
\underbrace{
\bfrho(z)^{\langle\alpha\rangle}
\int_{\overline{D(\bfi,R)}}
\frac{\bfrho(w)^\lambda}
     {|\bfrho(z,w)|^{n+1+\lambda+\langle\alpha\rangle}}
\, dV(w)}_{=:I_2(z)}
\|g\|_\infty .
\end{align*}
By \eqref{eqn:keylem}, $I_1(z)$ is bounded.
For $w\in D(\bfi,R)$, Lemma \ref{blem} yields
\[
\frac{ \bfrho(w)^{\lambda}}{|\bfrho(z,w)|^{n+1+\lambda+\langle \alpha \rangle}}
\lesssim \frac{1} {|\bfrho(z,\bfi)|^{n+1+\lambda+\langle \alpha \rangle}}.
\]
Hence,
\[
I_2(z) \lesssim \frac{\bfrho(z)^{\langle \alpha \rangle} } {|\bfrho(z,\bfi)|^{n+1+\lambda+\langle \alpha \rangle}}
\lesssim \min\left\{\bfrho(z)^{\langle \alpha \rangle},\frac{{|z|}^{\langle \alpha \rangle}}{(|z|^2+1)^{(n+1+\lambda+\langle \alpha \rangle)/2}}\right\},
\]
where we use \eqref{eqn:rho(z,i)} in the last step.
It implies that $I_2(z)\to 0$ as $z\to\partial\widehat{\calU}$.
Therefore, $\bfrho^{\langle \alpha \rangle} \bfL^{\alpha} f \in C_0(\calU)$.
\end{proof}

\begin{corollary}\label{cor:P_lambdainto}
Let $\lambda>-1$.  Then $\widetilde{\mathcal{P}_\lambda}$ maps $L^\infty(\calU)$ boundedly into $\widetilde{\calB}$,
and maps $C_0(\calU)$ boundedly into $\widetilde{\calB}_0$.
\end{corollary}
\begin{proof}
Recall that 
\[
|\widetilde{\nabla}f|^2=2\sum_{j=1}^{n-1}|\bfrho^{1/2}\bfL_j f|^2+8|\bfrho\bfL_n f|^2.
\]
An application of Lemma \ref{lem:P_lambda} to all multi-indices $\alpha$ with $|\alpha|=1$ gives the result.
\end{proof}

\section{Integral representation}

For any $t>0$, let $\mathcal{A}^{-t}$ denote the space of holomorphic functions $f$ on $\calU$ such that 
\[
\sup_{z\in\calU} \bfrho(z)^t |f(z)|<\infty.
\]
These spaces are usually called Korenblum spaces, following the terminology used for the unit disk (c.f. \cite{HKZ20}).
Clearly, $\mathcal{A}^{-t}\not\subset A_\lambda^p(\calU)$.
Nevertheless, functions in $\mathcal{A}^{-t}$ admit the same integral representation as Bergman functions.

\begin{lemma}\label{lem:Bek83}
Let $t>0$ and $f\in\mathcal{A}^{-t}$. Then $f=\mathcal{P}_{\lambda}f$ for any $\lambda> t-1$.
\end{lemma}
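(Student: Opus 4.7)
The plan is to apply Lemma~\ref{thm:DK} via a regularize-and-pass-to-the-limit argument. First, the bound $|f(w)|\le C\bfrho(w)^{-t}$ combined with \eqref{eqn:keylem} (taking $\gamma=\lambda-t$ and $\theta=t$, both legitimate since $\lambda>t-1$ and $t>0$) gives
\[
\int_{\calU}|K_{\lambda}(z,w)f(w)|\,dV_{\lambda}(w)~\lesssim~\bfrho(z)^{-t}~<~\infty
\]
for every $z\in\calU$, so $\mathcal{P}_{\lambda}f(z)$ is well-defined.

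Next, I introduce the holomorphic regularizer $\phi_{n}(z):=(1-iz_{n}/n)^{-N}$, where $N$ is a large integer to be chosen. Since $\RePt(1-iz_{n}/n)=1+(\ImPt z_{n})/n\ge 1$ on $\calU$, $\phi_{n}$ is holomorphic, bounded above by $1$ in modulus, and tends to $1$ pointwise. A direct calculation shows $|1-iz_{n}/n|=(2/n)\,|\bfrho(z,(0^{\prime},in))|$, so a further appeal to \eqref{eqn:keylem} with reference point $(0^{\prime},in)\in\calU$, with $\gamma=\lambda-t$ and $\theta=N-n-1-(\lambda-t)$, shows that whenever $N>n+1+\lambda-t$ the function $f_{n}:=\phi_{n}\cdot f$ lies in $A^{1}_{\lambda}(\calU)$. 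Lemma~\ref{thm:DK} applied in the case $p=1$ then yields $f_{n}=\mathcal{P}_{\lambda}f_{n}$ for every such $n$.

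Finally, since $|f_{n}|\le |f|\le C\bfrho^{-t}$ uniformly in $n$, the function $w\mapsto |K_{\lambda}(z,w)|\bfrho(w)^{\lambda-t}$ dominates the family of integrands $K_{\lambda}(z,w)f_{n}(w)\bfrho(w)^{\lambda}$, and it is integrable by the very estimate of the first paragraph. Dominated convergence together with the pointwise limit $f_{n}(z)\to f(z)$ then gives
\[
f(z)~=~\lim_{n}f_{n}(z)~=~\lim_{n}\mathcal{P}_{\lambda}f_{n}(z)~=~\mathcal{P}_{\lambda}f(z).
\]
The one technical point that needs care is engineering the regularizer $\phi_{n}$ so that it simultaneously is holomorphic on $\calU$, uniformly bounded, convergent to $1$, and injects enough decay at infinity to put $f_{n}$ into a genuine Bergman space; the choice $(1-iz_{n}/n)^{-N}$, essentially an unnormalized Bergman kernel based at the interior point $(0^{\prime},in)$, accomplishes all four requirements at once, with $N$ calibrating the last.
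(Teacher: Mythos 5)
Your proof is correct, but it takes a genuinely different route from the paper. The paper proves the lemma by transporting everything to the unit ball: after checking absolute convergence of $\mathcal{P}_{\lambda}f(z)$ via \eqref{eqn:keylem}, it changes variables with the Cayley transform, shows that $F(\eta)=f(\Phi(\eta))/(1+\eta_{n})^{n+1+\lambda}$ lies in $A_{\lambda}^{1}(\ball)$, and invokes the classical reproducing formula for $A^1_\lambda(\ball)$. You instead stay on $\calU$ and regularize: multiplying $f$ by $\phi_{m}(z)=(1-iz_{n}/m)^{-N}$ (an unnormalized kernel based at $(0^{\prime},im)$, with $N>n+1+\lambda-t$) puts $\phi_{m}f$ into $A^{1}_{\lambda}(\calU)$, where Lemma~\ref{thm:DK} with $p=1$ applies (its hypothesis reduces to $\lambda\ge\lambda$, and $t>0$, $\lambda>t-1$ give the needed $\lambda>-1$), and then dominated convergence --- justified by the uniform bound $|\phi_{m}f|\le C\bfrho^{-t}$ and the integrability established in your first paragraph --- passes to the limit. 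All the estimates check out: $\RePt(1-iz_{n}/m)\ge1$ gives holomorphy and $|\phi_{m}|\le1$, the identity $|1-iz_{n}/m|=(2/m)|\bfrho(z,(0^{\prime},im))|$ is correct, and \eqref{eqn:keylem} applies in both places since $\gamma=\lambda-t>-1$ and $\theta>0$ (note $|\bfrho(z,w)|=|\bfrho(w,z)|$, so integrating in the first slot is legitimate). What each approach buys: the paper's transfer argument leans on the ball theory it uses elsewhere anyway and is computationally short; yours is more self-contained within the half-space framework, reuses only the quoted Lemma~\ref{thm:DK}, and the regularization trick is in the same spirit as the factor $g_{k}$ used in Lemma~\ref{lem:denseHardy} of the appendix. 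One cosmetic remark: you used $n$ both for the dimension and for the regularization index; rename the latter (say $m$) to avoid a clash.
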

\begin{proof}
Fix $\lambda> t-1$. By \eqref{eqn:keylem}, the integral 
\[
\int\limits_{\calU} \frac{\bfrho(w)^{\lambda}}{|\bfrho(z,w)|^{n+1+\lambda}} |f(w)| dV(w)
\]
converges for every $z\in\calU$. 
Let $z=\Phi(\xi)$ and $w=\Phi(\eta)$. Then
\begin{align*}
\mathcal{P}_{\lambda}f(z)&=c_{\lambda} \int\limits_{\calU} \frac{\bfrho(w)^{\lambda}}{\bfrho(z,w)^{n+1+\lambda}} f(w) dV(w)\\
&=4c_{\lambda} \int\limits_{\ball} \frac{\left(\frac{1-|\eta|^2}{|1+\eta_n|^2}\right)^{\lambda}}{\left(\frac{1-\xi\cdot\overline{\eta}}{(1+\xi_n)(1+\overline{\eta}_n)}\right)^{n+1+\lambda}}
 f(\Phi(\eta)) \frac{1}{|1+\eta_n|^{2(n+1)}} dV(\eta)\\
 &=(1+\xi_n)^{n+1+\lambda}
 \int\limits_{\ball} \frac{1}{(1-\xi\cdot\overline{\eta})^{n+1+\lambda}} 
 \frac{f(\Phi(\eta))}{(1+\eta_n)^{n+1+\lambda}} dv_{\lambda}(\eta)\\
 &:=(1+\xi_n)^{n+1+\lambda}
 \int\limits_{\ball} \frac{F(\eta)}{(1-\xi\cdot\overline{\eta})^{n+1+\lambda}}  dv_{\lambda}(\eta),
\end{align*}
where $dv_{\lambda}(\xi)= 4 c_{\lambda}  (1-|\xi|^2)^{\lambda} dV(\xi)$ is a probability measure on $\ball$.
A direct estimate gives
\[
 \int\limits_{\ball} |F(\eta)| dv_{\lambda}(\eta) \lesssim  \int\limits_{\ball} \frac{\bfrho(\Phi(\eta))^{-t}}{|1+\eta_n|^{n+1+\lambda}}dv_{\lambda}(\eta) \lesssim \int\limits_{\ball}\frac{(1-|\eta|^2)^{\lambda-t}}{|1+\eta_n|^{n+1+\lambda-2t}} dv(\eta)<\infty.
\]
Then by the reproducing formula of weighted Bergman spaces on $\ball$, we obtain
\[
\mathcal{P}_{\lambda}f(z)= (1+\xi_n)^{n+1+\lambda} F(\xi) =f(z),
\]
as desired.
\end{proof}

\begin{lemma}\label{lem:uniq_n}
Let $f\in \widetilde{\calB}$. If $\bfL_n f\equiv 0$ then $f\equiv 0$.
\end{lemma}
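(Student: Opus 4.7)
The plan is completely direct, using only the definitions. Since $\bfL_n = \partial_n$, the hypothesis says that $\partial_n f\equiv 0$ on $\calU$. Because $\calU$ is connected and, for each fixed $z'\in\bbC^{n-1}$, the vertical slice $\{z_n\in\bbC: \ImPt z_n > |z'|^2\}$ is a connected open subset of $\bbC$, it follows that $f(z',z_n)$ depends only on $z'$. Moreover the slice for $z'=0$ meets the slice for every other $z'$ in the sense that both project onto the same function of $z'$ via holomorphy across horizontal hyperplanes; more precisely, for each $z'$ the value $f(z',z_n)$ is independent of $z_n$, and for each $z_n$ with $\ImPt z_n>0$ the function $z'\mapsto f(z',z_n)$ is holomorphic in a neighborhood of $0'$. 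Letting $g(z'):=\lim_{t\to\infty}f(z',it)$ (which by the slice-independence just equals the common value $f(z',z_n)$), one checks that $g$ is entire on $\bbC^{n-1}$ and that $f(z)=g(z')$ on $\calU$.

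Now I invoke the Bloch condition. For $j=1,\dots,n-1$,
\[
\bfL_j f(z) = \partial_j f(z) + 2i\bar z_j \partial_n f(z) = \partial_j g(z'),
\]
so by the formula for $|\widetilde{\nabla}f|$,
\[
|\widetilde{\nabla}f(z)|^{2} = 2\bfrho(z)\sum_{j=1}^{n-1}|\partial_j g(z')|^{2}.
\]
The hypothesis $f\in\calB$ gives a constant $C$ with
\[
\bfrho(z)\sum_{j=1}^{n-1}|\partial_j g(z')|^{2} \le C/2 \qquad \text{for all }z\in\calU.
\]
Fix any $z'\in\bbC^{n-1}$ and let $\ImPt z_n\to\infty$; then $\bfrho(z)\to\infty$ while the sum on the left is a fixed quantity depending only on $z'$. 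This forces $\partial_j g(z')=0$ for every $j$ and every $z'$, so $g$ is a constant. Finally, $f(\bfi)=g(0')=0$ because $f\in\widetilde{\calB}$, so $g\equiv 0$ and hence $f\equiv 0$.

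There is essentially no obstacle: the entire argument is a one-line exploitation of the fact that $\bfrho(z)$ can be taken arbitrarily large with $z'$ held fixed, a phenomenon peculiar to the unbounded setting. The only point requiring a sentence of care is the identification $f(z)=g(z')$ with $g$ entire, which follows immediately from $\partial_n f\equiv 0$ and holomorphy of $f$ on the tube-like domain $\calU$.
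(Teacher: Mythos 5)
Your proof is correct and is essentially the same as the paper's: use $\partial_n f\equiv 0$ to see $f$ depends only on $z'$, note $|\widetilde\nabla f(z)|^2=2\bfrho(z)\sum_j|\partial_j f(z')|^2$, fix $z'$ and let $\ImPt z_n\to\infty$ to force $\partial_j f\equiv 0$, then use $f(\bfi)=0$. Your extra remarks justifying that $f(z)=g(z')$ with $g$ entire are a harmless elaboration of what the paper states in one line.
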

\begin{proof}
 Suppose $f\in \widetilde{\calB}$ and $\bfL_n f\equiv 0$. Then $f$ is independent of the $n$-th coordinate, and hence the same is true for each $\partial_j f$.
Consequently,
\begin{equation*}\label{eqn:partial_nf}
\big|\widetilde{\nabla} f(z)\big|^2 = 2 \left(\ImPt z_n-|z^{\prime}|^2\right)
\sum_{j=1}^{n-1} \left|\partial_j f(z^{\prime})\right|^2.
\end{equation*}
Fix $z^{\prime}$ and let  $\ImPt z_n\to\infty$.
Then the right-hand side of the above equality tends to infinity, whereas the left-hand side remains bounded.
This is possible only if $\partial_j f \equiv 0$ for all $j\in \{1,\ldots,n-1\}$. 
Hence $f$ is constant. Since $f(\bfi)=0$, we conclude that $f\equiv 0$.
\end{proof}


\begin{proof}[Proof of Theorem \ref{thm:reproducingfml}]
Let $f\in \widetilde{\calB}$ and fix $\lambda>-1$. Set
\[
b_N:=\frac{(-2i)^N\Gamma(1+\lambda)}{\Gamma(1+\lambda+N)}.
\]
We divide the argument into three cases.

\medskip
\noindent\textit{Case I: $N=1$.}
In this case the desired identity takes the form
\begin{equation}\label{eqn:reproducingfml1}
f(z) = b_1 \int\limits_{\calU} \widetilde{K_\lambda}(z,w) \bfrho(w) \bfL_n f(w) dV_\lambda(w).
\end{equation}
To prove this, put
\[
g(z) = f(z) - b_1 \int\limits_{\calU} \widetilde{K_\lambda}(z,w) \bfrho(w)  \bfL_n f(w) dV_\lambda(w).
\]
Since $f\in \widetilde{\calB}$, we have $\bfrho\bfL_n f\in L^{\infty}(\calU)$,
and therefore Corollary \ref{cor:P_lambdainto} implies that $g\in \widetilde{\calB}$.
A direct computation yields
\begin{equation*}\label{eqn:partg}
\bfL_n g (z) = \bfL_n f(z) - \mathcal{P}_{\lambda+1}(\bfL_nf)(z) = 0,
\end{equation*}
where the last equality follows from the fact that $\bfL_nf\in\mathcal{A}^{-1}$ together with  Lemma \ref{lem:Bek83}.
Lemma \ref{lem:uniq_n} now shows that $g\equiv 0$, which proves \eqref{eqn:reproducingfml1}.

\medskip
\noindent\textit{Case II: $N=0$.}
Using \eqref{eqn:reproducingfml1} we obtain
\begin{align*}
&\widetilde{\mathcal{P}_{\lambda}}f(z)=\int\limits_{\calU} \widetilde{K_\lambda}(z,u) f(u)dV_{\lambda}(u)\\
&=b_1 \int\limits_{\calU} \widetilde{K_\lambda}(z,u)
\bigg\{\int\limits_{\calU}  \widetilde{K_\lambda}(u,w) \bfrho(w) \bfL_n f(w) dV_\lambda(w)\bigg\} dV_{\lambda}(u).
\end{align*}
By \eqref{eqn:doubleintofK}, 
\begin{align*}
\int\limits_{\calU} \int\limits_{\calU}& \big|\widetilde{K_{\lambda}}(z,u) \widetilde{K_{\lambda}}(u,w)\bfrho(w) \bfL_n f(w) \big| dV_{\lambda}(w) dV_{\lambda}(u)\\
&\lesssim \|f\|_{\calB} \left\{1 + \log \frac {|\bfrho(z,\bfi)|^2} {\bfrho(z)}\right\}^2<\infty,
\end{align*}
so Fubini's theorem applies. Hence
\begin{align*}
\widetilde{\mathcal{P}_\lambda} f(z) =& b_1\int\limits_{\calU} \bfrho(w) \bfL_n f(w) \bigg\{\int\limits_{\calU} \widetilde{K_\lambda}(z,u) \widetilde{K_\lambda}(u,w) dV_{\lambda}(u) \bigg\} dV_{\lambda}(w)\\
=& b_1 \int\limits_{\calU} \bfrho(w) \bfL_n f(w)  \widetilde{K_\lambda}(z,w) dV_\lambda(w)= f(z),
\end{align*}
where we used Lemma \ref{lem:Kreproduce} and again \eqref{eqn:reproducingfml1}.

\medskip
\noindent\textit{Case III: $N\geq 2$.}
Applying $\bfL_n^N$ to both sides of \eqref{eqn:reproducingfml1} and multiplying by $\bfrho(z)^N$,
we have
\begin{equation*}\label{eqn:N-Df}
\bfrho(z)^N \bfL_n^N f(z) = \frac{b_1}{b_N}  c_{\lambda+N}
\int\limits_{\calU} \frac{\bfrho(z)^N} {\bfrho(z,w)^{n+1+\lambda+N}} \bfrho(w)^{1+\lambda} \bfL_n f(w) dV(w).
\end{equation*}
By Fubini's theorem,
\begin{align*}
&\widetilde{\mathcal{P}_\lambda} (\bfrho^N \bfL_n^N f) (z)\\
=& \frac{b_1}{b_N}  c_{\lambda+N} \int\limits_{\calU}  \widetilde{K_\lambda}(z,u) 
\bigg\{  \int\limits_{\calU} \frac {\bfrho(u)^N} {\bfrho(u,w)^{n+1+\lambda+N}} \bfrho(w)^{1+\lambda} \bfL_nf(w) dV(w) \bigg\}dV_\lambda(u)\\
=&\frac{b_1}{b_N} c_\lambda \int\limits_{\calU} \bfrho(w)^{1+\lambda} \bfL_nf(w)
\bigg\{ \overline{ c_{\lambda+N} \int\limits_{\calU} \frac {\bfrho(u)^{\lambda+N}} {\bfrho(w,u)^{n+1+\lambda+N}} \overline{\widetilde{K_\lambda}(z,u)} dV(u) \bigg\} }dV(w)\\
=& \frac{b_1}{b_N} c_\lambda \int\limits_{\calU} \bfrho(w)^{1+\lambda} \bfL_nf(w) \widetilde{K_\lambda}(z,w) dV(w) =  \frac{f(z)}{b_N},
\end{align*}
where we used that $\overline{\widetilde{K_\lambda}(z,\cdot)}\in A_\lambda^1(\calU)$ and Lemma \ref{thm:DK}, and again \eqref{eqn:reproducingfml1}.
The interchange of integral order is justified as follows. By \eqref{eqn:keylem} and \eqref{eqn:2normofK},
\begin{align*}
\int\limits_{\calU}& \int\limits_{\calU}   \left| \widetilde{K_\lambda}(z,u) \frac {\bfrho(u)^{N}} {\bfrho(u,w)^{n+1+\lambda+N}}  \bfrho(w)^{1+\lambda} \bfL_nf(w)\right| dV(w)dV_\lambda(u)\\
&\lesssim \|f\|_{\calB} \int\limits_{\calU} \big|\widetilde{K_\lambda}(z,u)\big| \bigg\{\bfrho(u)^{N}\int\limits_{\calU} \frac {\bfrho(w)^{\lambda}dV(w)} {|\bfrho(u,w)|^{n+1+\lambda+N}} \bigg\}  dV_\lambda(u)\\
&\lesssim \|f\|_{\calB} \int\limits_{\calU} \big|\widetilde{K_\lambda}(z,u)\big| dV_\lambda(u) \lesssim  \|f\|_{\calB}\left\{1 + \log \frac {|\bfrho(z,\bfi)|^2} {\bfrho(z)}\right\}< \infty
\end{align*}
for every $z\in\calU$. This completes the proof of the theorem.
 \end{proof}

\section{Norm equivalence}

\begin{proposition}\label{lem:bddnsofder4bloch}
Let $\alpha\in\bbN_0^n$ with $|\alpha|\ge1$. The map $f\mapsto \bfrho^{\langle \alpha \rangle} \bfL^{\alpha} f$ is  bounded  from $\widetilde{\calB}$ into $L^{\infty}(\calU)$, and bounded from $\widetilde{\calB}_0$ into $C_0(\calU)$.
\end{proposition}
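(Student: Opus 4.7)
The proposition is essentially a direct corollary of the integral representation (Theorem \ref{thm:reproducingfml}) combined with Lemma \ref{lem:P_lambda}, so the plan is very short.

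First I would observe that the definition of the invariant gradient
\[
|\widetilde{\nabla} f|^2 = 2\bfrho \sum_{j=1}^{n-1} |\bfL_j f|^2 + 8\bfrho^2 |\bfL_n f|^2
\]
yields the pointwise bound $\bfrho(z) |\bfL_n f(z)| \leq \tfrac{1}{\sqrt{8}} |\widetilde{\nabla} f(z)|$. Consequently, if $f\in \widetilde{\calB}$ then $g := \bfrho\,\bfL_n f$ belongs to $L^{\infty}(\calU)$ with $\|g\|_\infty \leq \tfrac{1}{\sqrt{8}} \|f\|_{\calB}$, and if $f\in \widetilde{\calB}_0$ then $g \in C_0(\calU)$.

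Next, fix any $\lambda>-1$ and apply Theorem \ref{thm:reproducingfml} with $N=1$ to represent
\[
f ~=~ b_1 \widetilde{\mathcal{P}_{\lambda}}(g), \qquad b_1 = \frac{-2i\,\Gamma(1+\lambda)}{\Gamma(2+\lambda)}.
\]
For any multi-index $\alpha\in \bbN_0^n$ with $|\alpha|\geq 1$, apply the differential operator $\bfrho^{\langle \alpha\rangle}\bfL^{\alpha}$ to both sides. Lemma \ref{lem:P_lambda} applied to $\widetilde{\mathcal{P}_{\lambda}}(g)$ immediately delivers
\[
\bfrho^{\langle \alpha\rangle} \bfL^{\alpha} f \in L^{\infty}(\calU) \quad \text{if } f\in\widetilde{\calB}, \qquad  \bfrho^{\langle \alpha\rangle} \bfL^{\alpha} f \in C_0(\calU) \quad \text{if } f\in\widetilde{\calB}_0.
\]
Tracking constants through the estimate in the proof of Lemma \ref{lem:P_lambda} (which rests on the pointwise estimate \eqref{eq:ptwsest} and the integral identity \eqref{eqn:keylem}) gives $\|\bfrho^{\langle \alpha\rangle} \bfL^{\alpha} \widetilde{\mathcal{P}_{\lambda}}(g)\|_\infty \lesssim \|g\|_\infty$, hence the desired operator bound $\|\bfrho^{\langle \alpha\rangle} \bfL^{\alpha} f\|_\infty \lesssim \|f\|_{\calB}$.

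I do not expect any serious obstacle. The only technical point worth checking is that $\bfrho^{\langle \alpha\rangle}\bfL^{\alpha}$ commutes with the integral $\widetilde{\mathcal{P}_\lambda}(g)$, but this is already implicit in the proof of Lemma \ref{lem:P_lambda}, where differentiation under the integral sign is justified by the absolute convergence of the bound on $|\bfrho^{\langle \alpha\rangle} \bfL^\alpha f(z)|$ obtained via \eqref{eq:ptwsest} and \eqref{eqn:keylem}. Thus the proof reduces to a one-line invocation of Theorem \ref{thm:reproducingfml} followed by a direct application of Lemma \ref{lem:P_lambda}.
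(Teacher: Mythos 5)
Your proposal is correct and follows exactly the paper's own argument: extract $g=\bfrho\,\bfL_n f\in L^{\infty}(\calU)$ (resp.\ $C_0(\calU)$) from the Bloch (resp.\ little Bloch) condition, write $f=b_1\widetilde{\mathcal{P}_{\lambda}}(g)$ via Theorem \ref{thm:reproducingfml} with $N=1$, and invoke Lemma \ref{lem:P_lambda}. No issues to report.
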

\begin{proof}
Let $f\in \widetilde{\calB}$ (resp. $\widetilde{\calB}_0$). 
Then $\bfrho \bfL_n f\in L^{\infty}(\calU)$ (resp. $C_0(\calU)$) and $\|\bfrho\calL_nf\|_\infty\lesssim \|f\|_{\calB}$. 
Theorem \ref{thm:reproducingfml} tells us that 
$f=  b_1  \widetilde{\mathcal{P}_{\lambda}} (\bfrho \bfL_n f)$
for any $\lambda>-1$. This together with Lemma \ref{lem:P_lambda} gives the desired boundedness.
\end{proof}

 \begin{proposition}\label{cor:BPL}
Let $\lambda>-1$. The operator $\widetilde{\mathcal{P}_\lambda}$ is bounded from $L^{\infty}(\calU)$ onto $\widetilde{\calB}$, and from $C_0(\calU)$ onto $\widetilde{\calB}_0$.
\end{proposition}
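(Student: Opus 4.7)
The plan is to observe that everything needed is already essentially in place. By Corollary \ref{cor:P_lambdainto} we already know $\widetilde{\mathcal{P}_\lambda}$ is bounded from $L^\infty(\calU)$ into $\widetilde{\calB}$ and from $C_0(\calU)$ into $\widetilde{\calB}_0$, so the only remaining content is the surjectivity of each map. For this, the integral representation of Theorem \ref{thm:reproducingfml} at $N=1$ does all the work.

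More precisely, I would argue as follows. Fix $\lambda>-1$ and let $f\in \widetilde{\calB}$. Set
\[
g \;:=\; b_1\, \bfrho\, \bfL_n f, \qquad b_1 = \frac{-2i\,\Gamma(1+\lambda)}{\Gamma(2+\lambda)}.
\]
Proposition \ref{lem:bddnsofder4bloch}, applied with the multi-index $\alpha=(0^\prime,1)$ (so $\langle\alpha\rangle=1$), guarantees that $g\in L^\infty(\calU)$ whenever $f\in\widetilde{\calB}$, and moreover $g\in C_0(\calU)$ whenever $f\in\widetilde{\calB}_0$. Theorem \ref{thm:reproducingfml} at $N=1$ then reads
\[
f \;=\; b_1\,\widetilde{\mathcal{P}_\lambda}(\bfrho\,\bfL_n f) \;=\; \widetilde{\mathcal{P}_\lambda}(g),
\]
which exhibits $f$ as an element of $\widetilde{\mathcal{P}_\lambda}(L^\infty(\calU))$ in the first case, and as an element of $\widetilde{\mathcal{P}_\lambda}(C_0(\calU))$ in the second. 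This proves surjectivity onto $\widetilde{\calB}$ and onto $\widetilde{\calB}_0$ respectively.

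There is no real obstacle here: the whole proposition is a bookkeeping consequence of (i) the already established boundedness in Corollary \ref{cor:P_lambdainto}, (ii) the reproducing identity of Theorem \ref{thm:reproducingfml}, and (iii) the mapping property for the derivative operator in Proposition \ref{lem:bddnsofder4bloch}. The only point worth being careful about is to invoke Proposition \ref{lem:bddnsofder4bloch} in its two flavours simultaneously so that both the $L^\infty$-to-$\widetilde{\calB}$ and the $C_0$-to-$\widetilde{\calB}_0$ statements are covered; combined with the already known boundedness in the forward direction, this yields that $\widetilde{\mathcal{P}_\lambda}$ is a bounded surjection in each setting, which is the content of the proposition.
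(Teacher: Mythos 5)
Your proposal is correct and takes essentially the same route as the paper: the boundedness and the ``into'' statements are quoted from Corollary \ref{cor:P_lambdainto}, and surjectivity in both cases is read off from Theorem \ref{thm:reproducingfml} with $N=1$ applied to $g=b_1\bfrho\,\bfL_n f$. The only cosmetic difference is that you invoke Proposition \ref{lem:bddnsofder4bloch} with $\alpha=(0^{\prime},1)$ to get $\bfrho\,\bfL_n f\in L^{\infty}(\calU)$ (resp. $C_0(\calU)$), which also follows directly from the definitions of $\widetilde{\calB}$ and $\widetilde{\calB}_0$, as the paper implicitly uses.
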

\begin{proof}
The boundedness and the ``into'' part were already established in Corollary \ref{cor:P_lambdainto}.
The ``onto'' part follows directly from Theorem \ref{thm:reproducingfml} with $N=1$.
\end{proof}

Now, we are ready to prove Theorem \ref{thm:bloch norm equiv}.

\begin{proof}[Proof of Theorem \ref{thm:bloch norm equiv}]
It suffices to show that 
\begin{equation*}\label{eqn:thm:bloch norm equiv}
\sum_{|\alpha | = N} \| \bfrho^{\langle \alpha \rangle} \bfL^{\alpha} f\|_{\infty}
\lesssim \|f\|_{\calB} \lesssim \|\bfrho^N \bfL_n^N f\|_{\infty}.
\end{equation*}
The first inequality is an immediate consequence of Proposition \ref{lem:bddnsofder4bloch}.
To prove the second one, note from Theorem \ref{thm:reproducingfml} that
$f=  b_N \widetilde{\mathcal{P}_{\lambda}} (\bfrho^N \bfL_n^N f)$,
together with Proposition \ref{cor:BPL} yields the desired result.
\end{proof}

An application of Theorem \ref{thm:bloch norm equiv} could generalize Lemma \ref{lem:uniq_n} to all cases.
We refer to it as the uniqueness property for functions in $\widetilde{\calB}$. It shows how differently Bloch functions on $\calU$ behave when compared with those on the unit ball. 

\begin{corollary}\label{thm:uniquess}
Suppose $f\in\widetilde{\calB}$. If $\bfL^{\alpha} f\equiv 0$ for some $\alpha\in \mathbb{N}_0^n$, then $f\equiv 0$.
\end{corollary}

\begin{proof}
The argument is divided into two steps.

\medskip
\noindent\textit{Step 1: The case $\alpha=(0',N)$.}
Assume that $\bfL_n^N f\equiv0$. Then $f$ is necessarily a polynomial of degree at most $N-1$ in the variable $z_n$, namely
\[
f(z)=f_{N-1}(z^{\prime}) z_n^{N-1}+f_{N-2}(z^{\prime}) z_n^{N-2}+\cdots+ f_0(z^{\prime}),
\]
where each $f_j$ is holomorphic in $z'$. 
Differentiating, we obtain
\[
\bfrho(z)^{N-1}\bfL_n^{N-1} f(z) = (N-1)! \bfrho(z)^{N-1} f_{N-1}(z^{\prime}).
\]
By Theorem \ref{thm:bloch norm equiv}, the left-hand side is bounded on $\calU$, whereas the right-hand side tends to $+\infty$ whenever $z'$ is fixed and $\ImPt z_n\to+\infty$, unless $f_{N-1}\equiv0$. 
Iterating this argument yields $f_{N-2}=\cdots=f_1\equiv0$, so that $f(z)=f_0(z')$.  
Lemma~\ref{lem:uniq_n} then forces $f\equiv0$.


\medskip
\noindent\textit{Step 2: The general case.}
Assume now that $\bfL^\alpha f\equiv0$ for some multi-index $\alpha\in\bbN_0^n$.  
Using the expansion
\[
\bfL^{\alpha} f ~=~ \sum_{\gamma^{\prime} \leq \alpha^{\prime}} \binom{\alpha^{\prime}}{\gamma^{\prime}}
(2i\bar{z}^{\prime})^{\gamma^{\prime}}
\partial^{\alpha-(\gamma^{\prime}, -|\gamma^{\prime}|)}f,
\]
we may regard the left-hand side as a polynomial in $\bar{z}^{\prime}$. 
Since this polynomial vanishes identically, all its coefficients must be zero; hence
\[
\partial^{\alpha - (\gamma', -|\gamma'|)} f \equiv 0
\qquad\text{for all }\gamma'\le \alpha'.
\]
In particular, taking $\gamma'=\alpha'$ yields
\[
\bfL_n^{|\alpha|} f \equiv 0.
\]
By Step 1, this implies $f\equiv0$.  
The proof of the theorem is complete.
\end{proof}

To address potential concerns, readers might note that the parallels between Theorems \ref{thm:reproducingfml}, \ref{thm:bloch norm equiv}, and Corollary \ref{thm:uniquess}, and those in our previous work \cite{LSX22} on weighted Bergman space, possibly questioning whether this constitutes merely a straightforward generalization or follows the same technical path.
This is not the case.
In the case of weighted Bergman space, the uniqueness property served as the starting point, from which the integral representation and norm equivalences were subsequently derived.
While for Bloch-type space, the logical direction is reversed. The integral representation with respect to $\widetilde{\mathcal{P}_{\lambda}}$ constitutes the central ingredient, the norm equivalence follow from the boundedness and integral representation of $\widetilde{\mathcal{P}_{\lambda}}$, and only thereafter the uniqueness property is obtained.

\section{$\widetilde{\calB}_0$ as the predual of $A_{\lambda}^1(\calU)$}

We denote by $M(\calU)$ the space of complex Radon measures on $\calU$, and for $\mu\in M(\calU)$ we write
\[
\|\mu\|=|\mu|(\calU),
\]
where $|\mu|$ denotes the total variation of $\mu$.

\begin{lemma}\label{lem:mu}
Let $\lambda>-1$, $\mu\in M(\calU)$, and define
\[
g(w):=\int\limits_{\calU} \frac{\bfrho(z)}{\bfrho(w,z)^{n+2+\lambda}} d\mu(z), \quad w\in \calU.
\]
Then $g\in A_\lambda^1(\calU)$ and $\|g\|_{1,\lambda}\lesssim \|\mu\|$.
\end{lemma}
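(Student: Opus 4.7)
The plan is a direct application of Tonelli's theorem combined with the integral identity \eqref{eqn:keylem}. By definition,
\[
\|g\|_{A_\lambda^1(\calU)} ~=~ \int_\calU |g(w)|\, dV_\lambda(w) ~\leq~ c_\lambda \int_\calU \bfrho(w)^\lambda \int_\calU \frac{\bfrho(z)}{|\bfrho(w,z)|^{n+2+\lambda}}\, d|\mu|(z)\, dV(w),
\]
where $|\mu|$ denotes the total variation. Since the integrand is nonnegative, Tonelli's theorem permits swapping the order of integration, turning this into
\[
c_\lambda \int_\calU \bfrho(z) \left\{ \int_\calU \frac{\bfrho(w)^\lambda}{|\bfrho(w,z)|^{n+2+\lambda}}\, dV(w) \right\} d|\mu|(z).
\]

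The key step is then to evaluate the inner integral by invoking \eqref{eqn:keylem} with parameters $\theta = 1$ and $\gamma = \lambda$, so that $n+1+\theta+\gamma = n+2+\lambda$ matches the exponent exactly. This yields
\[
\int_\calU \frac{\bfrho(w)^\lambda}{|\bfrho(w,z)|^{n+2+\lambda}}\, dV(w) ~=~ \frac{4\pi^n \Gamma(1+\lambda) \Gamma(1)}{\Gamma^2\!\left(\tfrac{n+2+\lambda}{2}\right)}\, \bfrho(z)^{-1}.
\]
Substituting back, the factors $\bfrho(z)$ and $\bfrho(z)^{-1}$ cancel, leaving
\[
\|g\|_{A_\lambda^1(\calU)} ~\lesssim~ \int_\calU d|\mu|(z) ~=~ \|\mu\|,
\]
which proves both the integrability of $g$ and the claimed norm estimate. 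The holomorphicity of $g$ on $\calU$ is a routine consequence of differentiation under the integral sign, justified by dominated convergence after noting that for $w$ in a compact subset of $\calU$ the kernel $\bfrho(z)/\bfrho(w,z)^{n+2+\lambda}$ is bounded in $|\bfrho(w,z)|^{-(n+2+\lambda)}$ times $\bfrho(z)$, and $|\mu|$ is a finite measure.

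There is essentially no obstacle here; the only subtlety is simply recognizing that the weight $\bfrho(z)$ in the kernel is precisely what is needed to cancel the singularity produced by \eqref{eqn:keylem}, which motivates why the exponent $n+2+\lambda$ (one larger than the Bergman reproducing exponent) appears in the definition of $g$.
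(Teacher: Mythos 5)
Your argument is correct and is essentially the paper's own proof: bound $|g(w)|$ by the total variation integral, apply Tonelli, and evaluate the inner integral via \eqref{eqn:keylem} with $\theta=1$, $\gamma=\lambda$, so that the resulting $\bfrho(z)^{-1}$ cancels the weight $\bfrho(z)$ and yields $\|g\|_{A_\lambda^1(\calU)}\lesssim\|\mu\|$. Your additional remark on holomorphicity (uniform convergence of the integral on compact subsets of $\calU$, since $|\mu|$ is finite) is a harmless supplement that the paper leaves implicit.
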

\begin{proof}
By \eqref{eqn:keylem}, there is a positive constant $C$ such that
\begin{eqnarray*}
\|g\|_{1,\lambda} &=& \int\limits_{\calU}\bigg|\int\limits_{\calU} \frac {\bfrho(z)}{\bfrho(w,z)^{n+2+\lambda}} d\mu(z)\bigg| dV_\lambda(w)\\
&\leq& \int\limits_{\calU} \bigg\{\int\limits_{\calU} \frac {\bfrho(w)^\lambda dV(w)}{|\bfrho(w,z)|^{n+2+\lambda}} \bigg\} \bfrho(z) d|\mu|(z)
\leq C\|\mu\|,
\end{eqnarray*}
which proves the lemma.
\end{proof}

\begin{proof}[Proof of Theorem \ref{thm:predual}]
For each $g\in A_\lambda^1(\calU)$, it is obvious that the map
\[
f \longmapsto  \int\limits_{\calU} (\bfrho \bfL_n f )\overline{g} dV_\lambda,\quad f\in\widetilde{\calB}_0,
\]
defines a bounded linear functional on $\widetilde{\calB}_0$,  with norm $\lesssim\|g\|_{1,\lambda}$.

Now let $\Lambda$ be a bounded linear functional on $\widetilde{\calB}_0$.  
By Theorem \ref{thm:bloch norm equiv}, the map
\begin{equation*}\label{eq:map_L}
\mathcal{F}:\widetilde{\calB}_0 \to C_0(\calU),\ \ \ f \mapsto  \bfrho \bfL_n f,
\end{equation*}
is an embedding whose range is a closed subspace of $C_0(\calU)$.
Therefore, $\Lambda\circ \mathcal{F}^{-1}$
is a bounded linear functional on $\mathcal{F}(\widetilde{\calB}_0)$.
By Hahn-Banach extension theorem and Riesz representation theorem, there exists $\mu\in M(\calU)$ such that
$ \|\mu\|=\|\Lambda\circ \mathcal{F}^{-1}\|$ and
\[
\Lambda (f) = \Lambda\circ \mathcal{F}^{-1} (\mathcal{F} f) = \int\limits_{\calU} \mathcal{F} f d\mu
= \int\limits_{\calU} \bfrho(z) \bfL_n f(z) d\mu(z)
\]
for all $f\in \widetilde{\calB}_0$. Given $\beta>0$, it follows from Lemma \ref{lem:Bek83} that
\begin{align*}
\Lambda (f)&= \int\limits_{\calU} \bfrho(z) \mathcal{P}_\beta(\bfL_nf)(z) d\mu(z)\\
&= \int\limits_{\calU} \bfrho(z) \bigg\{ c_\beta
\int\limits_{\calU} \frac{\bfrho(w)^\beta}{\bfrho(z,w)^{n+1+\beta}} \bfL_n f(w) dV(w) \bigg\} d\mu(z).
\end{align*}
Using \eqref{eqn:keylem}, we have
\begin{align*}
\int\limits_{\calU} \int\limits_{\calU} & \frac {\bfrho(w)^\beta}{|\bfrho(z,w)|^{n+1+\beta}} |\bfL_n f(w)| \bfrho(z) dV(w) d|\mu|(z)\\
\lesssim &  \|f\|_{\calB} \int\limits_{\calU} \bfrho(z) \bigg\{\int\limits_{\calU}  \frac {\bfrho(w)^{\beta-1} dV(w)} {|\bfrho(z,w)|^{n+1+\beta}}
\bigg\} d|\mu|(w) 
\lesssim  \|f\|_{\calB} \|\mu\|.
\end{align*}
Thus Fubini's theorem applies, and we obtain
\begin{align*}
\Lambda(f) =&  \int\limits_{\calU} \bfrho(w)^\beta \bfL_n f(w) \bigg\{ c_\beta \int\limits_{\calU} \frac {\bfrho(z)}{\bfrho(z,w)^{n+1+\beta}} d\mu(z)\bigg\}  dV(w)\\
=&  \int\limits_{\calU} \bfrho(w) \bfL_n f(w) \overline{g(w)} dV_{\lambda}(w),
\end{align*}
where $\lambda=\beta-1>-1$ and 
\[
g(w) = c_\beta \int\limits_{\calU} \frac {\bfrho(z)}{\bfrho(w,z)^{n+2+\lambda}} d\overline{\mu}(z).
\]
By Lemma \ref{lem:mu}, $g\in A_\lambda^1(\calU)$, and moreover
\[
\|g\|_{1,\lambda}\lesssim \|\mu\|=\|\Lambda\circ \mathcal{F}^{-1}\| \lesssim \|\Lambda\|.
\]
This completes the proof of the theorem.
\end{proof}

\end{document}